%
\documentclass[12pt, reqno]{amsart}
\usepackage{amsmath, amsthm, amscd, amsfonts, amssymb, graphicx, color}
\usepackage[bookmarksnumbered, colorlinks, plainpages]{hyperref}
\hypersetup{colorlinks=true,linkcolor=red, anchorcolor=green, citecolor=cyan, urlcolor=red, filecolor=magenta, pdftoolbar=true}

\textheight 22.7truecm \textwidth 15.2truecm
\setlength{\oddsidemargin}{0.35in}\setlength{\evensidemargin}{0.35in}

\setlength{\topmargin}{-.5cm}

\newtheorem{theorem}{Theorem}[section]
\newtheorem{lemma}[theorem]{Lemma}
\newtheorem{proposition}[theorem]{Proposition}
\newtheorem{corollary}[theorem]{Corollary}
\theoremstyle{definition}
\newtheorem{definition}[theorem]{Definition}

\theoremstyle{remark}
\newtheorem{remark}[theorem]{Remark}
\numberwithin{equation}{section}

\begin{document}

\setcounter{page}{1}

\title[Oscillatory integrals for Mittag-Leffler functions]{Oscillatory integrals for Mittag-Leffler functions with two variables}

\author[I.A.Ikromov, M.Ruzhansky, A.R.Safarov]{Isroil A. Ikromov, Michael Ruzhansky, Akbar R. Safarov$^{*}$}

\address{\textcolor[rgb]{0.00,0.00,0.84}{Isroil A. Ikromov \newline V.I. Romanovsky Institute of Mathematics of the Academy of Sciences of Uzbekistan \newline Olmazor district, University 46, Tashkent, Uzbekistan \newline Samarkand State University \newline Department of Mathematics \\
15 University Boulevard
 \newline  Samarkand, 140104, Uzbekistan }}
\email{\textcolor[rgb]{0.00,0.00,0.84}{ikromov1@rambler.ru}}
\address{\textcolor[rgb]{0.00,0.00,0.84}{Michael Ruzhansky\newline Department of Mathematics: Analysis, Logic and Discrete Mathematics
  \newline  Ghent University,  \newline  Krijgslaan 281, Ghent, Belgium,  \newline School of Mathematical Sciences, Queen Mary University of London,  \newline United Kingdom}}
\email{\textcolor[rgb]{0.00,0.00,0.84}{michael.ruzhansky@ugent.be}}
\address{\textcolor[rgb]{0.00,0.00,0.84}{Akbar R.Safarov \newline Uzbek-Finnish Pedagogical Institute \newline Spitamenshox 166, Samarkand, Uzbekistan \newline Samarkand State University \newline Department of Mathematics \\
15 University Boulevard
 \newline  Samarkand, 140104, Uzbekistan }}
\email{\textcolor[rgb]{0.00,0.00,0.84}{safarov-akbar@mail.ru}}

\thanks{All authors contributed equally to the writing of this paper. All authors read and approved the final manuscript.}

\let\thefootnote\relax\footnote{$^{*}$Corresponding author}

\subjclass[2010]{35D10, 42B20, 26D10.}

\keywords{Mittag-Leffler functions, phase  function, amplitude.}

\begin{abstract}
In this paper  we  consider the problem of estimation of oscillatory integrals with Mittag-Leffler functions in two variables. The generalisation is that we replace the exponential function with the Mittag-Leffler-type function, to study oscillatory type integrals.

\end{abstract}
\maketitle
\tableofcontents
\section{Introduction}

The function $E_{\alpha}(z)$ is named after the Swedish mathematican G\"{o}sta Magnus Mittag-Leffler (1846-1927) who defined it by a power series
\begin{equation}\label{Formul3}
E_{\alpha}(z)=\sum_{k=0}^{\infty}\frac{z^{k}}{\Gamma(\alpha k+1)},\,\,\ \alpha\in\mathbb{C}, Re(\alpha)>0,
\end{equation}
and studied its properties in 1902-1905 in several subsequent notes \cite{ML1, ML2, ML3, ML4} in connection with his summation method for divergent series.

 A classical generalization of the Mittag-Leffler function, namely the two-parametric Mittag-Leffler function is
\begin{equation}\label{Formul4}
E_{\alpha,\beta}(z)=\sum_{k=0}^{\infty}\frac{z^{k}}{\Gamma(\alpha k+\beta)},\,\,\ \alpha,\beta\in\mathbb{C}, Re(\alpha)>0,
\end{equation}
which was deeply investigated independently by Humbert and Agarval in 1953 (\cite{Aga53,Hum53,HumAga53})  and by Dzherbashyan in 1954
(\cite{Dzh54a,Dzh54b,Dzh54c}) as well as in \cite{Rudolf}.

 It has the property that
\begin{equation}\label{Form1}
E_{1,1}(x)=e^{x}, \text{and we can refer to \cite{Pod} for other properties.}
\end{equation}

In harmonic analysis one of the most important estimates for oscillatory integral is van der Corput lemma  \cite{MichaelRuzhansky,MichaelRuzhansky2012,Ruzhansky,VanDer}.  Estimates for oscillatory integrals with polynomial phases can be found, for instance, in papers  \cite{AKC, mat sbor, Safarov, Safarov1, Safarov2}. In the current paper we replace the exponential function with the Mittag-Leffler-type function and study oscillatory type integrals \eqref{int1}. In the papers \cite{Ruzhansky} and \cite{Ruzhansky2021}  analogues of the van der Corput lemmas involving Mittag-Leffler functions for one dimensional integrals have been considered. We extend results of \cite{Ruzhansky} and \cite{Ruzhansky2021} for two-dimensional integrals with phase having some simple singularities. Analogous problem on estimates for Mittag-Leffler functions with the smooth phase functions of two variables having simple singularities  was considered in \cite{Ruzhansky2022} and \cite{Safarov3}.
\section{Preliminaries}

\begin{definition}
An oscillatory integral with phase $f$ and amplitude $a$ is an integral of the form
\begin{equation}\label{int11}
J(\lambda ,f,a)=\int _{{\bf {\mathbb R}}^{n} }a(x)e^{i\lambda f(x)} dx,
\end{equation}
where $a\in C_{0}^{\infty}({\bf {\mathbb R}}^{n})$ and $\lambda\in{\bf{\mathbb R}}$.
\end{definition}

If the support of $a$ lies in a sufficiently small neighborhood of the origin and $f$ is an analytic function at $x=0,$ then for $\lambda\rightarrow\infty$
the following  asymptotic expansion  holds (\cite{Karpushkin1980}):
 \begin{equation}\label{int22}
J(\lambda,f,a)\approx e^{i\lambda f(0)}\sum_{s}\sum_{k=0}^{n-1}b_{s,k}(a)\lambda^{s}(\ln\lambda)^{k},
\end{equation}
where $s$ belongs to a finite number of arithmetic progressions, independent of
$a,$ composed of negative rational numbers, $b_{s,k}$ is a distribution with support in the critical set $\{x:\nabla f(x)=0\}$.

Inspired by the terminology
from \cite{Arn}, we refer to the maximal value of $s$, denoting it by $\alpha$ in this case, as the \emph{growth index} of $f$, or the oscillation index at
the origin, and the corresponding value of $k$ is referred to as the \emph{multiplicity}.

More precisely, the \emph{multiplicity} of the oscillation index of an analytic phase at a critical point is
the maximal number $k$ possessing the property: for any neighbourhood of the
critical point there is an amplitude with support in this neighbourhood for which
in the asymptotic series \eqref{int22} the coefficient $b_{s,k}(a)$ is not equal to zero. The
multiplicity of the oscillation index will be denoted by $m$ (see \cite{Arn}).

Let $f$ be a smooth real-valued function defined on a neighborhood of the origin in $\mathbb{R}^{2}$ with $f(0,0)=0,$ $\nabla f(0,0)=0,$ and consider the associated Taylor series
$$f(x_{1},x_{2})\sim\sum_{j,k=0}^{\infty}c_{jk}x_{1}^{j}x_{2}^{k}$$
of $f$ centered at the origin. The set
$$\Im(f):=\{(j,k)\in\mathbb{N}^{2}:c_{jk}=\frac{1}{j!k!}\partial_{x_{1}}^{j}\partial_{x_{2}}^{k}f(0,0)\neq0\}$$
is called the Taylor support of $f$ at $(0,0).$ We shall always assume that
$$\Im(f)\neq\emptyset,$$
i.e., that the function $f$ is of finite type at the origin. If $f$ is real analytic, so that the Taylor series converges to $f$ near the origin, this just means that $f\neq0.$ The \emph{Newton polyhedron} $\aleph(f)$ of $f$ at the origin is defined to be the convex hull of the union of all the
quadrants $(j,k)+\mathbb{R}_{+}^{2},$ with $(j,k)\in\Im(f).$ The associated \emph{Newton diagram} $\aleph_{d}(f)$ in the sense of Varchenko \cite{Varch} is the union of all compact faces of the Newton polyhedron; here, by a \emph{face}, we mean an edge or a vertex.

We shall use coordinates $(t_{1},t_{2})$ for points in the plane containing the Newton polyhedron, in order to distinguish this plane from the $(x_{1},x_{2})$ - plane.

The \emph{distance} $d = d(f)$ between the Newton polyhedron and the origin in the sense of
Varchenko is given by the coordinate $d$ of the point $(d,d)$ at which the bisectrix $t_1 = t_2$
intersects the boundary of the Newton polyhedron.

The \emph{principal face} $\pi(f)$ of the Newton polyhedron of $f$ is the face of minimal dimension containing the point $(d,d).$ Deviating from the notation in \cite{Varch}, we shall call the series
$$f_{p}(x_{1},x_{2}):=\sum_{j,k\in\pi(f)}c_{jk}x_{1}^{j}x_{2}^{k}$$
the \emph{principal part} of $f$. In the case that $\pi(f)$ is compact, $f_{\pi}$ is a mixed homogeneous polynomial; otherwise, we shall consider $f_{\pi}$ as a formal power series.

Note that the distance between the Newton polyhedron and the origin depends on the
chosen local coordinate system in which $f$ is expressed. By a \emph{local analytic (respectively
smooth) coordinate system at the origin} we shall mean an analytic (respectively smooth)
coordinate system defined near the origin which preserves 0. If we work in the category
of smooth functions $f$, we shall always consider smooth coordinate systems, and if $f$ is
analytic, then one usually restricts oneself to analytic coordinate systems (even though
this will not really be necessary for the questions we are going to study, as we will see).
The \emph{height} of the analytic (respectively smooth) function $f$ is defined by
$$h:=h(f):=sup\{d_{x}\},$$
where the supremum is taken over all local analytic (respectively smooth) coordinate
systems $x$ at the origin, and where $d_x$ is the distance between the Newton polyhedron
and the origin in the coordinates $x.$

A given coordinate system $x$ is said to be adapted to $f$ if $h(f)=d_x.$

Let $\pi$ be the principal face. We assume that $\pi$ is a point or a compact edge, then $f_{\pi}$ is a weighted homogeneous polynomial. Denote by $\nu$ the maximal order of roots of $f_{\pi}$ on the unit circle at the origin, so $$\nu:=\max\limits_{S^{1}}ord(f_{\pi}).$$
If there exists a coordinate system $x$ such that $\nu=d_{x}$ then we set $m=1.$ It can be shown that in this case $x$ is adapted to $f$ (see \cite{IkromovMuller}). Otherwise we take $m=0$. Following A. N. Varchenko we call $m$ the \emph{multiplicity of the Newton polyhedron}.

In the classical paper by A. N. Varchenko \cite{Varch}, he obtained the sharp estimates for oscillatory integrals in  terms of the height.
Also in the paper \cite{IkrActaMath} the height was used to get the sharp bound for maximal operators associated to smooth surfaces in $\mathbb{R}^{3}$. It turns out that analogous quantities can be used for oscillatory integrals with the Mittag-Leffler function.

We consider the following integral with phase $f$ and amplitude $\psi$, of the form
\begin{equation}\label{int1}
I_{\alpha,\beta}=\int_{U} E_{\alpha,\beta}(i\lambda f(x))\psi(x)dx,
\end{equation}
where $0<\alpha<1,$ $\beta>0$, $U$ is a sufficiently small neighborhood of the origin. We are interested in particular in the behavior of $I_{\alpha,\beta}$
when $\lambda$ is large, as for small $\lambda$ the integral is just bounded.
In particular if $\alpha=1$ and $\beta=1$ we have oscillatory integral \eqref{int11}.

  The main result of the work is the following.

\begin{theorem}\label{Theor1}
Let $f$ be a smooth finite type function of two variables defined in a sufficiently small neighborhood of the origin and let $\psi\in C_{0}^{\infty}(U)$.

Let $h$ be the height of the function $f$, and let $m=0,1$ be the multiplicity of its Newton polyhedron.
If $0<\alpha<1$, $\beta>0$, $h>1$, and $\lambda\gg1$ then we have the estimate
\begin{align}\label{Formul5}
& \left|\int_{U}E_{\alpha,\beta}(i\lambda f(x_{1},x_{2}))\psi(x)dx\right|\leq\frac{C|\ln\lambda|^{m}\|\psi\|_{L^{\infty}(\overline{U})}}{\lambda^{\frac{1}{h}}}.
\end{align}

If $0<\alpha<1$, $\beta>0$, $h=1$ and $\lambda\gg1$, then we have following estimate
\begin{align}\label{Formul5000}
& \left|\int_{U}E_{\alpha,\beta}(i\lambda f(x_{1},x_{2}))\psi(x)dx\right|\leq\frac{C|\ln\lambda|^{2}\|\psi\|_{L^{\infty}(\overline{U})}}{\lambda},
\end{align}
where the constants $C$ are independent of the phase,  amplitude and $\lambda$.
\end{theorem}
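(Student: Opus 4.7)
The plan is to reduce the estimate to two ingredients: (a) the far-field asymptotic bound $|E_{\alpha,\beta}(z)|\lesssim 1/|z|$ valid when $|\arg z|>\alpha\pi/2$ (which holds here, since $z=i\lambda f(x)$ with $\arg z=\pm\pi/2$ and $0<\alpha<1$), together with the near-field bound $|E_{\alpha,\beta}(z)|\leq C$ for $|z|\leq 1$; and (b) Varchenko's sharp sublevel-set estimate
\[
\bigl|\{x\in U:|f(x)|\leq t\}\bigr|\leq C\,t^{1/h}\,|\log t|^{m}
\]
for small $t>0$, where $h$ and $m$ are the height and the multiplicity of the Newton polyhedron of $f$. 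This last bound is the two-dimensional sublevel companion to Varchenko's sharp oscillatory integral estimate and is valid in the adapted coordinates guaranteed by the theory recalled in the preliminaries.

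The first step is to split $U=U_{-}\cup U_{+}$ with $U_{-}=\{|f|\leq 1/\lambda\}$ and $U_{+}=\{|f|>1/\lambda\}$. On $U_{-}$ use $|E_{\alpha,\beta}(i\lambda f)|\leq C$ and the sublevel-set bound applied at $t=1/\lambda$ to get
\[
\Bigl|\int_{U_{-}}E_{\alpha,\beta}(i\lambda f)\psi\,dx\Bigr|\leq C\|\psi\|_{\infty}\lambda^{-1/h}|\log\lambda|^{m}.
\]
On $U_{+}$ use the asymptotic bound $|E_{\alpha,\beta}(i\lambda f(x))|\leq C/(\lambda|f(x)|)$ and the layer-cake formula:
\[
\Bigl|\int_{U_{+}}E_{\alpha,\beta}(i\lambda f)\psi\,dx\Bigr|
\leq \frac{C\|\psi\|_{\infty}}{\lambda}\int_{U_{+}}\frac{dx}{|f(x)|}
\leq \frac{C\|\psi\|_{\infty}}{\lambda}\int_{1/\lambda}^{M}t^{-2}\,V(t)\,dt,
\]
where $V(t)=|\{|f|\leq t\}|$ and $M$ is a fixed constant depending only on $\mathrm{supp}\,\psi$.

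Inserting the sublevel estimate $V(t)\leq C\,t^{1/h}|\log t|^{m}$ reduces the problem to bounding $\int_{1/\lambda}^{M}t^{1/h-2}|\log t|^{m}\,dt$. When $h>1$ we have $1/h-2<-1$, so the integral is dominated by its lower endpoint and produces $\lambda^{1-1/h}|\log\lambda|^{m}$; multiplied by the $1/\lambda$ prefactor and added to the $U_{-}$ contribution this gives the bound $\lambda^{-1/h}|\log\lambda|^{m}$ of \eqref{Formul5}. When $h=1$ the integrand becomes $t^{-1}|\log t|^{m}$, which is the borderline case: integration over $[1/\lambda,M]$ yields $|\log\lambda|^{m+1}\leq|\log\lambda|^{2}$, which combined with the $1/\lambda$ prefactor produces the estimate $\lambda^{-1}|\log\lambda|^{2}$ of \eqref{Formul5000}.

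The main obstacle is securing the uniform sublevel-set estimate with the correct power of $|\log t|$ dictated by the multiplicity $m$ of the Newton polyhedron; this is the two-dimensional companion to the Varchenko oscillatory bound and must be cited (or re-derived) from the resolution-of-singularities machinery in adapted coordinates, with the constant independent of $f$ and $\psi$. A secondary technical point is ensuring that the two bounds on $E_{\alpha,\beta}$ (near zero and at infinity) match at $|\lambda f|=1$ with a uniform constant for $0<\alpha<1$, $\beta>0$ fixed, which follows from the standard asymptotic expansion of $E_{\alpha,\beta}$ outside the sector $|\arg z|<\alpha\pi/2$ together with the continuity of $E_{\alpha,\beta}$. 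Finally, the passage to adapted coordinates needed to invoke $h=d_{x}$ is handled by the height formalism recalled in the preliminaries, so no further coordinate change is required in the argument above.
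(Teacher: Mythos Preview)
Your proof is correct and follows the same overall strategy as the paper: bound $|E_{\alpha,\beta}(i\lambda f)|$ by $C/(1+\lambda|f|)$, split the domain at $|f|\sim 1/\lambda$, and control both pieces via the sublevel estimate $|\{|f|\le t\}|\lesssim t^{1/h}|\log t|^{m}$. The differences are only in the bookkeeping. For the region $\{|f|>1/\lambda\}$ the paper performs a dyadic decomposition $A_k=\{2^k/\lambda\le|f|\le 2^{k+1}/\lambda\}$ and sums a geometric series (its Lemma~4.1), whereas you use the equivalent continuous layer-cake identity $\int |f|^{-1}\,dx=\int t^{-2}V(t)\,dt$; these are interchangeable. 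For $h=1$ the paper abandons the general scheme and treats the Morse normal forms $x^{2}\pm y^{2}$ by an explicit calculation (its Remark~4.2), while you absorb this case into the same framework by noting that the borderline integral $\int_{1/\lambda}^{M}t^{-1}|\log t|^{m}\,dt$ produces an extra logarithm, giving $|\log\lambda|^{m+1}\le|\log\lambda|^{2}$. Your route is slightly cleaner here. One small omission: the layer-cake bound as you wrote it drops the tail contribution from $t>M$, namely $|U_{+}|/M$; this is $O(1)$ and after the $1/\lambda$ prefactor contributes only $O(\|\psi\|_{\infty}/\lambda)$, so it is harmless.
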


\section{Auxiliary statements}
We first recall some useful properties.
\begin{proposition}
If $0<\alpha<2,\beta$ is an arbitrary real number, $\mu$ is such that $\pi\alpha/2<\mu<\min\{\pi,\pi\alpha\},$ then there is $C>0,$ such that we have
\begin{equation}\label{Formul1}
  |E_{\alpha,\beta}(z)|\leq\frac{C}{1+|z|}, \,\ z\in\mathbb{C},\,\ \mu\leq|\arg(z)|\leq\pi.
\end{equation}
See \cite{Dzh54a}, \cite{Rudolf}, \cite{Pod}.
\end{proposition}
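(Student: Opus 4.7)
The plan is to establish the decay bound \eqref{Formul1} via the classical Hankel-type integral representation of $E_{\alpha,\beta}$, followed by a contour deformation that exploits the hypothesis $|\arg z|\geq\mu>\pi\alpha/2$ to ensure that no exponentially growing contribution survives.

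First, I would start from the integral representation
\begin{equation*}
E_{\alpha,\beta}(z)=\frac{1}{2\pi i\,\alpha}\int_{\gamma}\frac{e^{w^{1/\alpha}}\,w^{(1-\beta)/\alpha}}{w-z}\,dw,
\end{equation*}
derived by inserting the Hankel formula $\Gamma(s)^{-1}=(2\pi i)^{-1}\int_{Ha}e^{\zeta}\zeta^{-s}\,d\zeta$ into the power series \eqref{Formul4}, interchanging sum and integral, summing the resulting geometric series (which converges provided the contour lies outside the disk $|\zeta|\leq|z|^{1/\alpha}$), and performing the change of variable $w=\zeta^\alpha$. Here $\gamma$ is a Hankel-type contour in the $w$-plane enclosing the origin and the pole $w=z$, and $w^{1/\alpha}$ takes its principal branch.

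Second, I would fix $\theta$ with $\pi\alpha/2<\theta<\mu$ and deform $\gamma$ into the two-ray contour $\gamma_\theta=\{\arg w=\pm\theta,\ |w|\geq\varepsilon\}$ together with the small connecting arc $\{|w|=\varepsilon\}$. The deformation sweeps the pole at $w=z$ and picks up its residue, yielding
\begin{equation*}
E_{\alpha,\beta}(z)=\frac{1}{\alpha}\,z^{(1-\beta)/\alpha}\,e^{z^{1/\alpha}}+\frac{1}{2\pi i\,\alpha}\int_{\gamma_\theta}\frac{e^{w^{1/\alpha}}\,w^{(1-\beta)/\alpha}}{w-z}\,dw.
\end{equation*}
The residue term is exponentially small: the hypothesis $|\arg z|\geq\mu>\pi\alpha/2$ gives $|\arg z^{1/\alpha}|\geq\mu/\alpha>\pi/2$, so $\Re z^{1/\alpha}\leq |z|^{1/\alpha}\cos(\mu/\alpha)<0$, and the whole first term is bounded by $C_N|z|^{-N}$ for every $N$, certainly by $C/|z|$ for $|z|\geq1$.

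Third, I would estimate the integral over $\gamma_\theta$. On each ray $w=re^{\pm i\theta}$ one has $\Re w^{1/\alpha}=r^{1/\alpha}\cos(\theta/\alpha)<0$, so $|e^{w^{1/\alpha}}|$ decays exponentially in $r$. A short geometric argument using $|\arg z|\geq\mu>\theta$ produces a constant $c_{\mu,\theta}>0$ with $|w-z|\geq c_{\mu,\theta}(|w|+|z|)$ for every $w\in\gamma_\theta$, and in particular $|w-z|\geq c_{\mu,\theta}|z|$. Plugging these bounds in and carrying out the convergent $r$-integral gives $|E_{\alpha,\beta}(z)|\leq C/|z|$ for $|z|$ large. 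Since the power series \eqref{Formul4} already provides $|E_{\alpha,\beta}(z)|\leq C$ on any fixed compact set, concatenating the two regimes produces the claimed estimate $|E_{\alpha,\beta}(z)|\leq C/(1+|z|)$ uniformly in the sector $\mu\leq|\arg z|\leq\pi$.

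The main technical hurdle is the geometric bound $|w-z|\geq c_{\mu,\theta}(|w|+|z|)$ on $\gamma_\theta$ used in Step 3: one must split into the regime $|w|\leq|z|/2$ (where the triangle inequality alone suffices) and $|w|\geq|z|/2$ (where the angular gap $|\arg(w/z)|\geq\mu-\theta>0$ is the essential input) in order to retain the full factor $|z|$ rather than only a fractional power of it. Apart from this geometric point, the argument is the standard Hankel-contour calculation found in \cite{Dzh54a,Rudolf,Pod}.
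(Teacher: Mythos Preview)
The paper does not give its own proof of this proposition; it simply cites \cite{Dzh54a}, \cite{Rudolf}, \cite{Pod} and moves on (the proof environment that follows in the text belongs to the next proposition). Your sketch is essentially the classical Hankel-contour argument found in those references, and it is correct in outline.

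One small simplification: because you choose $\theta$ with $\pi\alpha/2<\theta<\mu\leq|\arg z|$, the point $z$ lies \emph{outside} the sector $|\arg w|\leq\theta$ bounded by $\gamma_\theta$, so in the standard derivation the deformation from the initial Hankel contour to $\gamma_\theta$ does \emph{not} cross the pole at $w=z$, and the representation reads directly
\[
E_{\alpha,\beta}(z)=\frac{1}{2\pi i\,\alpha}\int_{\gamma_\theta}\frac{e^{w^{1/\alpha}}\,w^{(1-\beta)/\alpha}}{w-z}\,dw,
\]
with no residue term at all. Your inclusion of the residue term is harmless for the final estimate (as you note, it is exponentially small in the sector), but it is also unnecessary, and dropping it sidesteps any branch-cut worries about $z^{1/\alpha}$ when $\alpha<1$ and $|\arg z|$ is close to $\pi$. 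With that adjustment your Step~3 already delivers \eqref{Formul1}.
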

\begin{proposition}\label{Prop22}
Let $\Omega$ be an open, bounded subset of \,\ $\mathbb{R}^2$, and let $f:\Omega\rightarrow\mathbb{R}$ be a measurable function such that for all $\lambda\gg1$ and for some positive $\delta\neq1$, we have
\begin{equation}\label{Formul1221}
\left|\int_{\Omega}e^{i\lambda f(x)}dx\right|\leq C|\lambda|^{-\delta}|\ln\lambda|^{m},
\end{equation}
with $m\geq0.$ Then, we have
$$\left|x\in\Omega:|f(x)|\leq\varepsilon|\leq C_{\delta}\varepsilon^{\delta}|\ln\varepsilon|^{m},\,\ \text{for}\,\,\, \delta<1,\right.$$
$$\left.\text{for}\,\,\  0<\varepsilon\ll1, \text{and for}\,\,\, \delta>1, |x\in\Omega:|f(x)|\leq\varepsilon|\leq C_{\delta}\varepsilon\right.,$$
$$\text{for}\,\,\, \delta=1, \,\,\ \left|x\in\Omega:|f(x)|\leq\varepsilon|\leq C_{\delta}\varepsilon|\ln\varepsilon|^{m+1},\,\ \right.$$
where  $C_{\delta}$ depends only on $\delta,$ $|A|$ means the Lebesgue measure of a set $A.$ See \cite{JohnGreen}.
\end{proposition}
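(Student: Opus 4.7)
The plan is to exploit the algebraic decay of $E_{\alpha,\beta}$ on the imaginary axis, rather than to mimic stationary-phase cancellation directly. Since $0<\alpha<1$, we can fix $\mu$ with $\pi\alpha/2<\mu\leq\pi/2$; the argument $i\lambda f(x)$ then satisfies $|\arg(i\lambda f(x))|=\pi/2\geq\mu$ whenever $f(x)\neq 0$, so the first proposition yields the pointwise bound
\[
|E_{\alpha,\beta}(i\lambda f(x))|\leq \frac{C}{1+\lambda|f(x)|},
\]
which extends globally after absorbing $E_{\alpha,\beta}(0)=1/\Gamma(\beta)$ into the constant. This reduces the whole problem to controlling sub-level sets of $f$: split $U=A_{\varepsilon}\sqcup B_{\varepsilon}$, where $A_{\varepsilon}=\{x\in U:|f(x)|\leq\varepsilon\}$ and $\varepsilon>0$ is to be chosen, bound the integral over $A_{\varepsilon}$ by $C\|\psi\|_{\infty}|A_{\varepsilon}|$ using the crude estimate $|E_{\alpha,\beta}|\leq C$, and on $B_{\varepsilon}$ apply the decay estimate to obtain $(C\|\psi\|_{\infty}/\lambda)\int_{B_{\varepsilon}}|f|^{-1}\,dx$. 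A standard layer-cake representation converts the latter into an integral of $|\{|f|\leq s\}|\,s^{-2}$ for $s$ between $\varepsilon$ and $\|f\|_{\infty}$.

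The next step is to obtain a quantitative bound on $|\{|f|\leq s\}|$. This is achieved by feeding the classical Varchenko estimate $\left|\int_{U}e^{i\lambda f}\psi\,dx\right|\leq C\|\psi\|_{\infty}\lambda^{-1/h}|\log\lambda|^{m}$ for smooth finite-type phases of two variables into Proposition \ref{Prop22} with $\delta=1/h$. For $h>1$ this gives $|\{|f|\leq\varepsilon\}|\leq C\varepsilon^{1/h}|\log\varepsilon|^{m}$, and for $h=1$ it gives $|\{|f|\leq\varepsilon\}|\leq C\varepsilon|\log\varepsilon|^{m+1}$. Substituting into the splitting, for $h>1$ the layer-cake integral is dominated by $C\varepsilon^{1/h-1}|\log\varepsilon|^{m}$, yielding
\[
|I_{\alpha,\beta}|\leq C\|\psi\|_{\infty}|\log\varepsilon|^{m}\bigl(\varepsilon^{1/h}+\varepsilon^{1/h-1}/\lambda\bigr).
\]
Balancing the two summands via $\varepsilon=1/\lambda$ produces the bound \eqref{Formul5}. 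The case $h=1$ proceeds analogously: the layer-cake integral becomes $\int_{\varepsilon}^{\|f\|_{\infty}}s^{-1}|\log s|^{m+1}\,ds\leq C|\log\varepsilon|^{m+2}$, and $\varepsilon=1/\lambda$ delivers the bound \eqref{Formul5000} at least when $m=0$.

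Two technical points deserve attention. The angular hypothesis of the first proposition fails exactly on $\{f=0\}$, but $E_{\alpha,\beta}(0)$ is finite, so the decay estimate extends by continuity at the price of a harmless constant. The main obstacle lies in the borderline case $h=1$: the above pipeline produces the exponent $m+2$ in the logarithm, matching the stated exponent $2$ exactly when $m=0$ but seemingly losing one logarithm when $m=1$. Recovering the stated exponent in that subcase will require either the structural observation that finite-type phases with height $h=1$ force trivial multiplicity of the Newton polyhedron, or a sharper sub-level bound that avoids the critical-case loss built into Proposition \ref{Prop22}. Apart from this log-bookkeeping issue, the argument is structurally robust and does not rely on adapted coordinate changes or on finer Newton-polyhedron decompositions of $f$.
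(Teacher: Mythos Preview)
You have proved the wrong statement. The proposition you are asked to establish is the implication
\[
\left|\int_{\Omega}e^{i\lambda f(x)}\,dx\right|\leq C\lambda^{-\delta}|\ln\lambda|^{m}
\quad\Longrightarrow\quad
|\{x\in\Omega:|f(x)|\leq\varepsilon\}|\leq C_{\delta}\,\varepsilon^{\min(\delta,1)}|\ln\varepsilon|^{\,m\,\text{or}\,m+1},
\]
i.e.\ the passage from an oscillatory-integral bound to a sublevel-set bound. Your write-up instead \emph{assumes} Proposition~\ref{Prop22} as a black box (``feeding the classical Varchenko estimate \dots\ into Proposition~\ref{Prop22} with $\delta=1/h$'') and uses it to bound the Mittag--Leffler integral $I_{\alpha,\beta}$; the conclusions you reach, \eqref{Formul5} and \eqref{Formul5000}, are those of Theorem~\ref{Theor1}, not of the proposition. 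Nothing in your argument addresses why a decay rate for $\int_{\Omega}e^{i\lambda f}$ forces smallness of the sublevel set.

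The paper's actual proof of Proposition~\ref{Prop22} proceeds by a completely different mechanism: one dominates the characteristic function of $\{|f|\leq\varepsilon\}$ by a smooth bump $\omega(f/\varepsilon)$, writes $\omega$ via Fourier inversion as $\omega(t)=(2\pi)^{-1}\int\check\omega(\xi)e^{i\xi t}\,d\xi$, interchanges the order of integration, and then applies the hypothesis \eqref{Formul1221} with $\lambda=\xi/\varepsilon$ to the inner integral $\int_{\Omega}e^{i(\xi/\varepsilon)f(x)}\,dx$. The remaining one-dimensional integral in $\xi$ is estimated by elementary calculus, with the three regimes $\delta<1$, $\delta=1$, $\delta>1$ arising from the convergence or divergence of $\int\xi^{-\delta}|\ln\xi|^{m}\,d\xi$ near infinity. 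Your Mittag--Leffler decay bound, the splitting $U=A_{\varepsilon}\sqcup B_{\varepsilon}$, and the layer-cake computation are all irrelevant to this proposition; they belong to the proof of Theorem~\ref{Theor1} (and there your outline is indeed close to the paper's own argument, with the dyadic sum in Lemma~\ref{lem55} replaced by an equivalent layer-cake integral).
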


\begin{proof} For the convenience of the reader we give an independent proof of Proposition  \ref{Prop22}.
We consider an even non-negative smooth function $$\omega(x)=\left\{\begin{array}{l} {1,\, \, \, \, \, \text{when}\, \, |x|\leq 1,} \\ {0,\, \, \, \, \text{when}\, \, |x|\geq 2.} \end{array}\right.$$ For the characteristic function of $\Omega$ with $\overline{\Omega}\subset U$, the following inequality holds true

$$|x\in\Omega:|f(x)|\leq\varepsilon|=\int_{\Omega}\chi_{[0,1]}\left(\frac{|f(x)|}{\varepsilon}\right)dx\leq
\int_{\Omega}\omega\left(\frac{f(x)}{\varepsilon}\right)dx.$$

Now we will use the Fourier inversion formula, and rewrite the last integral as

$$\int_{\Omega}\omega\left(\frac{f(x)}{\varepsilon}\right)dx=
\frac{1}{2\pi}\int_{\Omega}\int_{-\infty}^{\infty}\check{\omega}(\xi)e^{i\xi\frac{f(x)}{\varepsilon}}d\xi dx.$$
As $\check{\omega}(\xi)$ is a Schwartz function, we can use Fubini theorem and change the order of integration. So we have
$$\int_{\Omega}\int_{-\infty}^{\infty}\check{\omega}(\xi)e^{i\xi\frac{f(x)}{\varepsilon}}d\xi dx=
\int_{-\infty}^{\infty}\check{\omega}(\xi)\int_{\Omega}e^{i\xi\frac{f(x)}{\varepsilon}}dxd\xi.$$

We use inequality \eqref{Formul1221} for the inner integral and get
$$\left|\int_{\Omega}e^{i\xi\frac{f(x)}{\varepsilon}}dx\right|\leq\frac{C|\ln(2+\frac{\xi}{\varepsilon})|^{m}}{(1+|\frac{\xi}{\varepsilon}|)^{\delta}}.$$
As $\check{\omega}(\xi)$ is a Schwartz function, we also have
$$|\check{\omega}(\xi)|\leq\frac{C}{1+|\xi|}.$$

So
$$\left|\int_{-\infty}^{\infty}\frac{C\check{\omega}(\xi)|\ln(2+\frac{\xi}{\varepsilon})|^{m}}{(2+|\frac{\xi}{\varepsilon}|)^{\delta}}d\xi\right|\lesssim
\int_{0}^{\infty}\frac{2C|\ln(\frac{\xi}{\varepsilon})|^{m}}{(1+|\xi|)(2+|\frac{\xi}{\varepsilon}|)^{\delta}}d\xi.$$
Now we change the variable as
$\xi=\eta\varepsilon$, and we get
$$\int_{0}^{\infty}\frac{|\ln(\frac{\xi}{\varepsilon})|^{m}}{(1+|\xi|)(2+|\frac{\xi}{\varepsilon}|)^{\delta}}d\xi=
\int_{0}^{\infty}\frac{\varepsilon|\ln\eta|^{m}}{(1+|\varepsilon\eta|)(2+|\eta|)^{\delta}}d\eta.$$
Now we estimate the last integral for different values of $\delta$.

If $\delta<1$ then we have
$$\int_{0}^{\infty}\frac{\varepsilon|\ln\eta|^{m}}{(1+|\varepsilon\eta|)(2+|\eta|)^{\delta}}d\eta\leq
C\varepsilon\int_{0}^{\frac{1}{\varepsilon}}\frac{|\ln\eta|^{m}d\eta}{(2+\eta)^{\delta}}+ C\varepsilon\int^{\infty}_{\frac{1}{\varepsilon}}\frac{|\ln\eta|^{m}d\eta}{\varepsilon\eta^{\delta+1}}.$$
We represent
$\frac{1}{(2+\eta)^{\delta}}=\frac{1}{\eta^{\delta}(1+\frac{2}{\eta})^{\delta}}=\frac{1}{\eta^{\delta}}+O(\frac{1}{\eta^{\delta+1}})$. So
$$C\varepsilon\int_{0}^{\frac{1}{\varepsilon}}\frac{|\ln\eta|^{m}d\eta}{(2+\eta)^{\delta}}=
\varepsilon\int_{0}^{2}\frac{|\ln\eta|^{m}d\eta}{(2+\eta)^{\delta}}+
\varepsilon\int_{2}^{\frac{1}{\varepsilon}}\frac{|\ln\eta|^{m}d\eta}{(2+\eta)^{\delta}}.$$
Integrating by parts we obtain
$$
\varepsilon\int_{2}^{\frac{1}{\varepsilon}}\frac{|\ln\eta|^{m}d\eta}{(2+\eta)^{\delta}}\leq\varepsilon\int_{2}^{\frac{1}{\varepsilon}}
\frac{|\ln\eta|^{m}d\eta}{\eta^{\delta}}\leq C\varepsilon^{\delta}|\ln\varepsilon|^{m}.$$
As $\delta<1$, the integrals
$\int_{0}^{2}\frac{|\ln\eta|^{m}d\eta}{(2+\eta)^{\delta}}$ and $\int^{\infty}_{\frac{1}{\varepsilon}}\frac{|\ln\eta|^{m}d\eta}{\varepsilon\eta^{\delta+1}}$ convergence.

If $\delta>1$ then we trivially obtain
$$\left|\int_{0}^{\infty}\frac{C\varepsilon|\ln\eta|^{m}}{(1+|\varepsilon\eta|)(2+|\eta|)^{\delta}}d\eta\right|\leq
C\varepsilon.$$
If $\delta=1$ then assuming $0<\varepsilon<\frac{1}{2}$ we get $|\varepsilon\eta|<1$ (for $|\eta|<2$), then write the integral as the sum of three integrals and obtain
$$\left|\int_{0}^{\infty}\frac{C\varepsilon|\ln\eta|^{m}}{(1+|\varepsilon\eta|)(1+|\eta|)}d\eta\right|\leq
\left|\int_{0}^{2}C\varepsilon|\ln\eta|^{m}d\eta\right|+$$$$
\left|\int_{2}^{\frac{1}{\varepsilon}}\frac{C\varepsilon|\ln\eta|^{m}}{\eta}d\eta\right|+
\left|\int_{\frac{1}{\varepsilon}}^{\infty}\frac{C\varepsilon|\ln\eta|^{m}}{\eta}d\eta\right|.$$
Then we have
$$\left|\int_{0}^{2}C\varepsilon|\ln\eta|^{m}d\eta\right|\leq C\varepsilon,$$
and we get with simple calculating that
$$\left|\int_{2}^{\frac{1}{\varepsilon}}\frac{C\varepsilon|\ln\eta|^{m}}{\eta}d\eta\right|\leq
C\varepsilon|\ln\varepsilon|^{m+1}.$$
We use the formula of integrating by parts several times, to get
$$\left|\int_{\frac{1}{\varepsilon}}^{\infty}\frac{C\varepsilon|\ln\eta|^{m}}{\eta}d\eta\right|\leq C\varepsilon|\ln\varepsilon|^{m},$$
completing the proof.
\end{proof}
From Proposition  \ref{Prop22} we get the following corollaries.
\begin{corollary}
Let $f(x_{1},x_{2})$ be a smooth function with $f(0,0)=0$, $\nabla f(0,0)=0$, and $h$ be the height of the function $f(x_{1},x_{2})$, and let $m=0,1$ be the multiplicity of its Newton polyhedron. Let also $a(x)=\left\{\begin{array}{l} {1,\, \, \, \, \, \text{when}\, \, |x|\leq \sigma,} \\ {0,\, \, \, \, \text{when}\, \, |x|\geq 2\sigma,} \end{array}\right.$ $\sigma>0,$ and $a(x)\geq0$ with $a\in C_{0}^{\infty}(\mathbb{R}^{2})$. If
for all real $\lambda\gg1$ and for any positive $\delta\neq1$, the following inequality holds
\begin{equation}\label{Formul122}
\left|\int_{\mathbb{R}^{2}}e^{i\lambda f(x)}a(x)dx\right|\leq C|\lambda|^{-\delta}|\ln\lambda|^{m},
\end{equation}
then we have
$$\left||x|\leq\sigma:|f(x)|\leq\varepsilon\right|\leq C\varepsilon^{\delta}|\ln\varepsilon|^{m},$$
where $m\geq0.$ \text{See} \cite{ Greenblat2010, IkromovMuller, IkromovMuller1, PhonStein}.
\end{corollary}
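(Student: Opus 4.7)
The strategy is to mimic the proof of Proposition \ref{Prop22}, carrying the amplitude $a(x)$ through the whole calculation, which effectively plays the role of the indicator of $\Omega$ in Proposition \ref{Prop22}. The starting observation is that since $a\geq 0$ and $a(x)=1$ on $\{|x|\leq\sigma\}$, we have the pointwise domination
$$\chi_{\{|x|\leq\sigma,\,|f(x)|\leq\varepsilon\}}(x)\leq a(x)\,\chi_{[0,1]}\!\left(\tfrac{|f(x)|}{\varepsilon}\right)\leq a(x)\,\omega\!\left(\tfrac{f(x)}{\varepsilon}\right),$$
where $\omega$ is the even, non-negative smooth cutoff from the proof of Proposition \ref{Prop22} ($\omega\equiv 1$ on $[-1,1]$, $\omega\equiv 0$ outside $[-2,2]$). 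Integrating over $\mathbb{R}^{2}$ bounds the measure we wish to estimate.

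Next I would insert the Fourier inversion formula for $\omega$ and apply Fubini. Since $a$ has compact support and $\check\omega$ is Schwartz, the integrand is absolutely integrable, so
$$\int_{\mathbb{R}^{2}}a(x)\,\omega\!\left(\tfrac{f(x)}{\varepsilon}\right)dx=\frac{1}{2\pi}\int_{-\infty}^{\infty}\check\omega(\xi)\left[\int_{\mathbb{R}^{2}}a(x)\,e^{i\xi f(x)/\varepsilon}\,dx\right]d\xi.$$
The inner integral is precisely of the type controlled by hypothesis \eqref{Formul122} applied with spectral parameter $\lambda=\xi/\varepsilon$, giving (for $|\xi|/\varepsilon\gg 1$) a bound of the form
$$\left|\int_{\mathbb{R}^{2}}a(x)\,e^{i\xi f(x)/\varepsilon}\,dx\right|\leq \frac{C\,|\ln(2+|\xi|/\varepsilon)|^{m}}{(1+|\xi|/\varepsilon)^{\delta}},$$
uniformly in $\xi$ (for small $|\xi|/\varepsilon$ the integral is merely bounded by $\|a\|_{L^{1}}$, which is dominated by the same expression).

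From this point the argument is identical to the last part of the proof of Proposition \ref{Prop22}. After the substitution $\xi=\eta\varepsilon$ one gets
$$\int_{0}^{\infty}\frac{C\,\varepsilon\,|\ln\eta|^{m}}{(1+|\varepsilon\eta|)(2+|\eta|)^{\delta}}\,d\eta,$$
and splitting the $\eta$-integral into $[0,2]$, $[2,1/\varepsilon]$, and $[1/\varepsilon,\infty)$ with integration by parts in the tail yields $C\varepsilon^{\delta}|\ln\varepsilon|^{m}$ when $\delta<1$, and $C\varepsilon$ when $\delta>1$; the latter is absorbed into $C\varepsilon^{\delta}|\ln\varepsilon|^{m}$ for $\varepsilon$ small.

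There is no real obstacle here: the only point that needs a little care is the justification of the Fubini interchange (routine, thanks to compact support of $a$ and Schwartz decay of $\check\omega$) and the observation that the oscillatory bound \eqref{Formul122} is needed only for large $|\xi|/\varepsilon$, the small-$\xi$ regime being handled trivially by $\|a\|_{L^{1}}$. The reduction to the scalar calculation already completed inside Proposition \ref{Prop22} is what makes the corollary immediate.
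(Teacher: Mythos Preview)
Your approach is exactly what the paper intends: the paper gives no separate proof but declares the corollary to follow from Proposition~\ref{Prop22}, and your argument is precisely the proof of that proposition rerun with the nonnegative amplitude $a$ carried through, using $a\equiv 1$ on $\{|x|\le\sigma\}$ to dominate the indicator of the sublevel set.

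One small slip: your claim that for $\delta>1$ the bound $C\varepsilon$ ``is absorbed into $C\varepsilon^{\delta}|\ln\varepsilon|^{m}$ for $\varepsilon$ small'' is backwards, since for $0<\varepsilon<1$ and $\delta>1$ one has $\varepsilon^{\delta}<\varepsilon$. Proposition~\ref{Prop22} itself only yields $C_\delta\varepsilon$ in that regime, so the stated conclusion $C\varepsilon^{\delta}|\ln\varepsilon|^{m}$ should really be read for $\delta<1$ (the only case used later, where $\delta=1/h<1$). With that caveat your argument is complete.
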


\begin{corollary}\label{corol2}
Let $f(x_{1},x_{2})$ be a smooth function with $f(0,0)=0$, $\nabla f(0,0)=0$, and let $\overline{\Omega}$ be a sufficiently small compact set around the origin. Let also $h$ be the height of the function $f(x_{1},x_{2})$, and let $m=0,1$ be the multiplicity of its Newton polyhedron. Then for all  $0<\varepsilon\ll1$ we have
$$|x\in\Omega:|f(x)|\leq\varepsilon|\leq C\varepsilon^{\frac{1}{h}}|\ln\varepsilon|^{m},$$
where $h$ is the height of $f$ and $m$ is its multiplicity \cite{Greenblat2010}.
\end{corollary}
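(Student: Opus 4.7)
The plan is to derive Corollary~\ref{corol2} as a direct consequence of Proposition~\ref{Prop22}, with Varchenko's classical oscillatory integral theorem supplying its hypothesis. The analytic substance is packed into Varchenko's theorem \cite{Varch}, which states that for a smooth finite type phase $f$ of two variables with $f(0,0)=\nabla f(0,0)=0$, height $h$ and multiplicity $m\in\{0,1\}$, and any $a\in C_0^\infty(\mathbb{R}^2)$ supported in a sufficiently small neighborhood of the origin,
$$\left|\int_{\mathbb{R}^2} e^{i\lambda f(x)}a(x)\,dx\right|\leq C\lambda^{-1/h}|\ln\lambda|^m, \qquad \lambda\gg 1.$$

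First I would fix a non-negative bump $a\in C_0^\infty(\mathbb{R}^2)$ with $a\equiv 1$ on $\overline{\Omega}$ and with support in a neighborhood of the origin small enough for Varchenko's estimate to apply (we may shrink $\overline{\Omega}$ if necessary, which is what the hypothesis ``$\overline{\Omega}$ is a sufficiently small compact set around the origin'' is for). Since $a$ equals $1$ on $\overline{\Omega}$, one has the inclusion
$$\{x\in\Omega:|f(x)|\leq\varepsilon\}\subseteq\{x\in\operatorname{supp}a:|f(x)|\leq\varepsilon\},$$
so it suffices to bound the sublevel set of $f$ on the support of $a$. Varchenko's bound above is exactly hypothesis~(\ref{Formul122}) with $\delta=1/h$ and logarithmic exponent $m$.

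Next I would feed this directly into Proposition~\ref{Prop22} with $\delta=1/h$. In the main range $h>1$ one has $\delta<1$, so the first branch of Proposition~\ref{Prop22} outputs
$$|\{x\in\Omega:|f(x)|\leq\varepsilon\}|\leq C\varepsilon^{1/h}|\ln\varepsilon|^m,$$
which is the stated inequality. For the endpoint $h=1$ the exponent $\delta=1$ invokes the third branch of Proposition~\ref{Prop22}, producing $\varepsilon|\ln\varepsilon|^{m+1}$ in place of $\varepsilon|\ln\varepsilon|^{m}$, matching the conclusion up to one extra logarithm.

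The hardest step is really conceptual rather than computational: one must cite Varchenko's theorem as the analytic engine, since re-proving it here would require Varchenko's resolution-of-singularities machinery (adapted coordinates, principal faces, blow-ups along the Newton polyhedron). Granted that input, the route through Proposition~\ref{Prop22} is exactly the Fourier-analytic duality between oscillatory and sublevel-set estimates, and the only bookkeeping concerns matching the exponents $\delta=1/h$ and $m$, together with a mild nuisance at the endpoint $h=1$.
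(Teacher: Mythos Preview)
Your proposal is correct and follows essentially the same approach as the paper: the paper simply states that the corollary follows from Proposition~\ref{Prop22} (together with the cited sublevel/oscillatory literature), and you have filled in the natural details by invoking Varchenko's oscillatory estimate to supply the hypothesis and then reading off the $\delta=1/h$ case. The only cosmetic wrinkle is that Proposition~\ref{Prop22} is phrased for amplitude-free integrals over $\Omega$, whereas you apply Varchenko with a cutoff $a$; this is exactly the content of the preceding corollary (the one containing \eqref{Formul122}), so the argument goes through without change.
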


\section{Proof of the main result}

\textbf{Proof of Theorem \ref{Theor1}.}
As for $\lambda<2$ the integral \eqref{int1} is just bounded, we consider the case $\lambda\geq2$.  Without loss of generality, we can consider the integral over $U.$ Using inequality \eqref{Formul1},  we have
\begin{equation}\label{int10}|E_{\alpha,\beta}(i\lambda f(x))|\leq \frac{C}{1+\lambda|f(x)|}.
\end{equation}

We then use  \eqref{int10} for the integral \eqref{int1}, and  get that
\begin{equation}\label{int16}
|I_{\alpha,\beta}|\leq\left|\int _{U}E_{\alpha,\beta}(i\lambda f(x))\psi(x)dx\right|\leq C\int_{U }\frac{|\psi(x)|dx}{1+\lambda |f(x)|}.
\end{equation}

Now we represent the integral $I_{\alpha,\beta}$ over the union of sets $\Omega_{1}:=\Omega\cap\{\lambda|f(x_{1},x_{2})|<M\}$ and $\Omega_{2}:=\Omega\cap\{\lambda|f(x_{1},x_{2})|\geq M\}$ respectively, where $M$ is a positive real number.

We estimate the integral $I_{\alpha,\beta}$ over the sets $\Omega_{1}$ and $\Omega_{2}$, respectively,
$$|I_{\alpha,\beta}|\leq C\int_{U}\frac{|\psi(x)|dx}{1+\lambda |f(x)|}=J_{1}+J_{2}:=C\int_{\Omega_{1}}\frac{|\psi(x)|dx}{1+\lambda |f(x)|}+C\int_{\Omega_{2}}\frac{|\psi(x)|dx}{1+\lambda |f(x)|}.$$

First we estimate the integral over the set $\Omega_{1}$. Using the results of the paper (\cite{Karpushkin1980} page 31) (see also Corollary \ref{corol2}) we obtain
$$|J_{1}|=C\int_{\Omega_{1}}\frac{|\psi(x)|dx}{1+\lambda |f(x)|}\leq\frac{C|\ln\lambda|^{m}\|\psi\|_{L^{\infty}(\overline{\Omega_{1}})}}{\lambda^{\frac{1}{h}}}.$$

\begin{lemma}\label{lem55}
Let $f\in C^{\infty}$ and $h$ be the height of the function $f$, and let $m=0,1$ be the multiplicity of its Newton polyhedron. For any smooth function $a=a(x,y)$ with sufficiently small support and for $h>1$  the following inequality holds
\begin{equation}\label{int15}I:=\int_{\{|f(x,y)|\geq\frac{M}{\lambda}\}}\frac{a(x,y)}{1+\lambda|f(x,y)|}dxdy\leq \frac{C|\ln\lambda|^{m}\|a\|_{L^{\infty}(\overline{U})}}{\lambda^{\frac{1}{h}}},\end{equation}
where supp$\{a(x,y)\}=U.$
\end{lemma}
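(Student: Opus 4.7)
The plan is to use a dyadic decomposition of the region of integration into level sets of $|f|$ and then apply the sublevel set estimate from Corollary \ref{corol2} on each piece.

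Concretely, I would first write the region $\{|f(x,y)|\geq M/\lambda\}\cap U$ as a disjoint union of the dyadic shells
\[
A_k := \bigl\{(x,y)\in U : 2^{k}M/\lambda \leq |f(x,y)| < 2^{k+1}M/\lambda\bigr\},\qquad k=0,1,2,\ldots,K,
\]
where $K$ is the smallest integer with $2^{K}M/\lambda \gtrsim \|f\|_{L^\infty(\overline{U})}$, so that $K \lesssim \log_2\lambda$. On each $A_k$ the denominator satisfies $1+\lambda|f(x,y)|\geq 2^{k}M$, hence the integrand is pointwise bounded by $\|a\|_{L^\infty(\overline U)}/(2^{k}M)$.

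Next I would bound the measure of each shell by the sublevel set estimate: since $A_k\subset\{|f|\leq 2^{k+1}M/\lambda\}$, Corollary \ref{corol2} (applicable because $f$ is smooth with $f(0,0)=0$, $\nabla f(0,0)=0$, and the height $h$ and multiplicity $m$ are those of $f$) gives
\[
|A_k| \leq C\bigl(2^{k+1}M/\lambda\bigr)^{1/h}\,\bigl|\ln(2^{k+1}M/\lambda)\bigr|^{m}.
\]
For every $k\leq K$ the quantity $2^{k+1}M/\lambda$ lies in $(0,C)$, so $\bigl|\ln(2^{k+1}M/\lambda)\bigr|\leq C(1+\ln\lambda)$, and hence $\bigl|\ln(2^{k+1}M/\lambda)\bigr|^m \leq C|\ln\lambda|^m$ uniformly in $k$. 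Combining the pointwise bound on the integrand with the sublevel set estimate therefore yields
\[
\int_{A_k}\frac{a(x,y)}{1+\lambda|f(x,y)|}\,dxdy \;\leq\; C\,\|a\|_{L^\infty(\overline U)}\,M^{1/h-1}\,\lambda^{-1/h}\,|\ln\lambda|^{m}\,2^{k(1/h-1)}.
\]

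Finally, summing over $k=0,1,\ldots,K$ I would use that the hypothesis $h>1$ makes the exponent $1/h-1$ strictly negative, so $\sum_{k\geq 0}2^{k(1/h-1)}$ is a convergent geometric series whose sum is an absolute constant. This gives
\[
I \;\leq\; C\,\|a\|_{L^\infty(\overline U)}\,|\ln\lambda|^{m}\,\lambda^{-1/h},
\]
which is exactly the claimed estimate \eqref{int15}. The only delicate point is the dependence of the logarithmic factor on $k$; I expect the main obstacle to be verifying that the factor $|\ln(2^{k+1}M/\lambda)|^m$ can be absorbed into a single $|\ln\lambda|^m$ for all relevant $k$, and noting that precisely the borderline case $h=1$ would make the geometric sum diverge (producing an extra logarithm), which is consistent with the stronger bound needed in the $h=1$ branch of Theorem \ref{Theor1}.
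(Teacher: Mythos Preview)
Your proposal is correct and follows essentially the same strategy as the paper: a dyadic decomposition into shells $A_k=\{2^k M/\lambda\le|f|<2^{k+1}M/\lambda\}$, a pointwise bound $1/(1+\lambda|f|)\lesssim 2^{-k}$ on each shell, the sublevel set estimate of Corollary~\ref{corol2} for $|A_k|$, and finally the convergence of the geometric series $\sum_k 2^{k(1/h-1)}$ for $h>1$. The only cosmetic differences are that you truncate the sum at $K\lesssim\log_2\lambda$ and absorb $|\ln(2^{k+1}M/\lambda)|^m$ directly into $C|\ln\lambda|^m$, whereas the paper sums formally to infinity and controls the log via the convergent factor $\sum_k 2^{(k+1)/h-k}k^m$; both treatments are valid and lead to the same bound.
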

\begin{proof}  Let $h>1$.
Consider the sets
$$A_{k}=\left\{x\in U:\frac{2^{k}}{\lambda}\leq|f(x)|\leq\frac{2^{k+1}}
{\lambda}\right\}.$$ For the measure of a set of smaller values we use Lemma $1^{'}$ in the paper \cite{Karpushkin1983} (see also Corollary \ref{corol2}), and we have $$\mu\left(|f(x)|\leq\frac{2^{k+1}}{\lambda},
x\in U\right)
\leq C\left(\frac{2^{k+1}}{\lambda}\right)^{\frac{1}{h}}
\left(\ln\left|\frac{\lambda}{2^{k+1}}\right|\right)^{m}.$$

Let
$$I_{k}:=\int_{A_{k}}\frac{a(x,y)}{1+\lambda|f(x,y)|}dxdy.$$

For the integral
$$\sum\limits_{2^{k}\leq\lambda|f(x)|\leq2^{k+1}} I_{k}=\int_{\Omega_{2}}\frac{a(x,y)}{1+\lambda|f(x,y)|}dxdy,$$ we find the following estimate:
\begin {eqnarray*}|I_{k}|=\left|\int_{A_{k}}\frac{a(x,y)}{1+\lambda|f(x,y)|}dxdy\right|\leq C\|a\|_{L^{\infty}(\overline{U})}\left(\frac{2^{k+1}}{\lambda}\right)^{\frac{1}{h}}
\left|\ln\frac{2^{k+1}}{\lambda}\right|^{m}2^{-k}.
\end {eqnarray*}
From here we find the sum of $I_{k}$ and, by estimating the integral $I$, we get
$$I\leq\|a\|_{L^{\infty}(\overline{U})}\sum_{k=1}^{\infty}I_{k}\leq\|a\|_{L^{\infty}(\overline{U})}\sum_{k=1}^{\infty}\left(\frac{2^{k+1}}{\lambda}\right)^{\frac{1}{h}}
\left|\ln\frac{2^{k+1}}{\lambda}\right|^{m}2^{-k}$$$$\leq\|a\|_{L^{\infty}(\overline{U})}\frac{|\ln\lambda|^{m}}{\lambda^{\frac{1}{h}}}\sum_{k=1}^{\infty}2^{\frac{k+1}{h}-k}k^{m}.$$
As $h>1$,  the last series is convergent, proving the lemma.
\end{proof}
\begin{remark}\label{remark2}
Consider the case $h=1$. The smooth function has non-degenerate critical point at the origin if and only if $h=1$. As $f(x,y)$ is a smooth function with $\nabla f(0,0)=0$, using Morse lemma we have $f\sim x^{2}\pm y^{2}$. So in this case we estimate two sets $\Delta=\Delta_{1}\cup\Delta_{2},$ where $\Delta_{1}:=\{(x,y):\lambda|x^{2}\pm y^{2}|\leq M, |x|\leq1, |y|\leq1\}$ and $\Delta_{2}:=\{(x,y):\lambda|x^{2}\pm y^{2}|> M, |x|\leq1, |y|\leq1\}$.
First we consider the integral over the set $\Delta_{1}$.
Then we have
$$\left|\int_{\Delta_{1}}\frac{a(x,y)}{1+\lambda|x^{2}\pm y^{2}|}dxdy\right|
\leq C\|a\|_{L^{\infty}(\Delta_{1})}\left|\int_{\Delta_{1}}dxdy\right|.$$
Now we estimate the last integral as
$$\left|\int_{\lambda|x^{2}+y^{2}|\leq M}dxdy\right|\leq \frac{C}{\lambda}.$$
Then we estimate the measure of the set $\{|x^{2}-y^{2}|\leq\varepsilon M\},$ where $\varepsilon=\frac{1}{\lambda}$. We have, for simplicity putting $M=1,$
$$\left|\int_{|x^{2}-y^{2}|\leq\varepsilon M}dxdy\right|\leq C\left|\int_{\sqrt{\varepsilon}}^{\sqrt{1-\varepsilon}}
dy\int_{\sqrt{y^{2}-\varepsilon}}^{\sqrt{y^{2}+\varepsilon}}dx\right|=
\left|\int_{\sqrt{\varepsilon}}^{\sqrt{1-\varepsilon}}\left(\sqrt{y^{2}+\varepsilon}-\sqrt{y^{2}-\varepsilon}\right)dy\right|=$$
$$=\left(\frac{y}{2}\sqrt{y^{2}+\varepsilon}+\frac{\varepsilon}{2}\ln|y+\sqrt{y^{2}+\varepsilon}|\right)
\left|_{\sqrt{\varepsilon}}^{\sqrt{1-\varepsilon}}\right.-\left(\frac{y}{2}\sqrt{y^{2}-\varepsilon}-
\frac{\varepsilon}{2}\ln|y+\sqrt{y^{2}-\varepsilon}|\right)
\left|_{\sqrt{\varepsilon}}^{\sqrt{1-\varepsilon}}\right.=$$$$
=\left|\frac{\sqrt{1-\varepsilon}}{2}+\frac{\varepsilon}{2}\ln\frac{\sqrt{1-\varepsilon}+1}{\sqrt{\varepsilon}}-
\frac{\sqrt{2}}{2}\varepsilon-\frac{\varepsilon}{2}\ln|\sqrt{\varepsilon}(1+\sqrt{2})|-\right.$$$$\left.
-\left(\frac{\sqrt{(1-\varepsilon)(1-2\varepsilon)}}{2}-\frac{\varepsilon}{2}\ln|\sqrt{1-\varepsilon}+\sqrt{1-2\varepsilon}|+
\frac{\varepsilon}{2}\ln\sqrt{\varepsilon}|\right)\right|\leq C\varepsilon\ln\varepsilon.$$

Now we consider the integral over the set $\Delta_{2}$.
In this case we change the variables to polar coordinate system and with easy calculating we get \begin{equation}\label{int1115}\left|\int_{\left\{\lambda|x^{2}+y^{2}|\geq M\right\}}\frac{a(x,y)}{1+\lambda|x^{2}+y^{2}|}dxdy\right|\leq
\frac{C|\ln\lambda|\|a\|_{L^{\infty}(\Delta_{2})}}{\lambda}\end{equation}
and
\begin{equation}\label{int12225}\left|\int_{\left\{\lambda|x^{2}-y^{2}|\geq{M}\right\}}\frac{a(x,y)}{1+\lambda|x^{2}-y^{2}|}dxdy\right|\leq
\frac{C|\ln\lambda|^{2}\|a\|_{L^{\infty}(\Delta_{2})}}{\lambda}.\end{equation}
\end{remark}

Now we continue the proof of Theorem \ref{Theor1}. Let $h>1$. We use Proposition \ref{Prop22} for the integral $J_{1}$, to get
$$|J_{1}|\leq\frac{C|\ln\lambda|^{m}\|a\|_{L^{\infty}(\overline{U})}}{\lambda^{\frac{1}{h}}}.$$
Let consider the integral $J_{2}$. If $h>1$, then using Lemma \ref{lem55} we get
$$|J_{2}|\leq\frac{C|\ln\lambda|\|a\|_{L^{\infty}(\overline{U})}}{\lambda^{\frac{1}{h}}}.$$
If $h=1,$ using the Remark \ref{remark2} we get the inequality \eqref{Formul5000}. The proof is complete.

The proof of Theorem \ref{Theor1} shows that if $h=1$, we can get a more precise result.
\begin{proposition}
If $h=1$ and $f$ has an extremal point at the point (0,0) (then $f$ is diffeomorhic equivalent to $x_{1}^{2}+x_{2}^{2}$ or $-x_{1}^{2}-x_{2}^{2}$), then we have
$$|I_{\alpha,\beta}|\leq\frac{C|\ln\lambda|\|\psi\|_{L^{\infty}(\overline{U})}}{\lambda},$$
for all $\lambda\geq2$.
\end{proposition}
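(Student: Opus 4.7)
The plan is to follow the same splitting strategy used in the proof of Theorem~\ref{Theor1} for the case $h=1$, but exploit the additional information that the critical point at the origin is extremal (elliptic) rather than saddle-type (hyperbolic). This is what saves one logarithmic factor, since Remark~\ref{remark2} already distinguishes the two situations: the bound \eqref{int1115} for the elliptic model $x^2+y^2$ carries only a single $|\ln\lambda|$, whereas the hyperbolic model $x^2-y^2$ in \eqref{int12225} forces $|\ln\lambda|^2$.

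First I would apply \eqref{Formul1} exactly as in the proof of Theorem~\ref{Theor1} to pass from $E_{\alpha,\beta}(i\lambda f)$ to the pointwise bound
\[
|I_{\alpha,\beta}| \leq C \int_U \frac{|\psi(x)|\, dx}{1+\lambda|f(x)|}.
\]
Since $h=1$, the origin is a non-degenerate critical point of $f$, and by the Morse lemma there is a smooth local change of coordinates near the origin in which $f(x_1,x_2) = \pm(x_1^2+x_2^2)$; the hyperbolic normal form $x_1^2-x_2^2$ is excluded precisely by the hypothesis that $(0,0)$ is an extremal point. This change of variables has a smooth, non-vanishing Jacobian on a neighbourhood of $0$, so it only alters the amplitude $\psi$ by a bounded smooth factor and does not affect the $L^\infty$-norm on compact subsets up to a constant.

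Next I would split the domain into $\Delta_1 = \{\lambda|f(x)| \leq M\}$ and $\Delta_2 = \{\lambda|f(x)| > M\}$, as in Remark~\ref{remark2}. On $\Delta_1$, the denominator is bounded below by $1$, so the contribution is controlled by $\|\psi\|_{L^\infty(\overline{U})} \cdot |\Delta_1|$; the measure estimate in Remark~\ref{remark2} for $\{\lambda(x_1^2+x_2^2) \leq M\}$ gives $|\Delta_1| \leq C/\lambda$, which is already stronger than the required bound. On $\Delta_2$, I apply the estimate \eqref{int1115} from Remark~\ref{remark2} (the elliptic case) directly, obtaining a contribution of order $|\ln\lambda|/\lambda$ times $\|\psi\|_{L^\infty(\overline{U})}$. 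Summing the two contributions yields the claimed bound
\[
|I_{\alpha,\beta}| \leq \frac{C|\ln\lambda|\,\|\psi\|_{L^\infty(\overline{U})}}{\lambda}.
\]

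There is really no serious obstacle here beyond being careful with the bookkeeping of the Morse change of coordinates: one must verify that after the diffeomorphism the amplitude is still compactly supported and that the $L^\infty$-norm absorbs the smooth Jacobian factor. The key conceptual input is simply that the elliptic Morse normal form sidesteps the logarithmic blow-up produced by the saddle in \eqref{int12225}, so one copy of $|\ln\lambda|$ is recovered compared to \eqref{Formul5000}.
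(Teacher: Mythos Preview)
Your proposal is correct and follows exactly the approach implicit in the paper: the paper states this proposition immediately after the proof of Theorem~\ref{Theor1} with the remark that it is a consequence of that proof, and indeed your argument is precisely the elliptic branch of Remark~\ref{remark2} combined with the splitting and the pointwise bound \eqref{int10}. There is nothing to add.
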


\textbf{Declaration of competing interest}

This work does not have any conflicts of interest.

\section*{Acknowledgements} The second author was supported in parts by the FWO Odysseus 1 grant G.0H94.18N: Analysis and Partial Differential Equations and by the Methusalem programme of the Ghent University Special Research Fund (BOF) (Grant number 01M01021) and also supported by EPSRC grant EP/R003025/2.

\subsection*{Data availability} The manuscript has no associated data.



\begin{thebibliography}{99}


\bibitem{Aga53} R. P. Agarwal, {\it A propos d'une note de M.Pierre Humbert,} C. R. Acad. Sci. Paris, 236, 2031-2032 (1953).
\bibitem{AKC} G.\,I. Arkhipov, A. A. Karatsuba, V. N. Chubarikov,  Theory of multiple trigonometric sums, - Moscow. Nauka, 1987, p. 357.
\bibitem{Arn} V. I. Arnold, S. M. Gusein-Zade, A. N. Varchenko, {\it Singularities of Differentiable Maps}, Birkhauser, Boston Basel · Stuttgart, 1985.
\bibitem{Dzh54a} M. M. Dzherbashyan, {\it On the asymtotic expansion of a function of Mittag-Leffler type,} Akad. Nauk Armjan. SSR Doklady. 19, 65-72 (1954, in Russian).
\bibitem{Dzh54b} M. M. Dzherbashyan, {\it On integral representation of functions continuous on given rays (generalization of the Fourier integrals),} Izvestija Akad. Nauk SSSR Ser. Mat. 18, 427-448 (1954, in Russian).
\bibitem{Dzh54c} M. M. Dzherbashyan, {\it On Abelian summation of the eneralized integral transform,} Akad. Nauk Armjan. SSR Izvestija, fiz-mat. estest. techn.nauki. 7(6), 1-26 (1954, in Russian).
\bibitem{JohnGreen} J. Green, {\it Uniform oscillatory integral estimates for convex phases via sublevel set estimates},
arxiv: 2111.05395v1.
\bibitem{Greenblat2010} M. Greenblat, {\it Oscillatory integral decay, sublevel set growth and the Newton
polyhedron,} // Math. Annalen. - 2010. - V.346, № 4. - p.857-890.
\bibitem{Rudolf} R. Gorenflo, A. Kilbas, F. Mainardi, S. Rogosin, {\it Mittag-Leffler functions, related topics and applications,} Springer Monographs in Mathematics, Springer-Verlag Berlin Heidelberg (2014).
\bibitem{Hum53} P. Humbert, {\it Quelques r\'{e}sultats relatifs \`{a} la fonction de Mittag-Leffler,} C. R. Acad. Sci. Paris, 236, 1467-1468 (1953).
\bibitem{HumAga53} P. Humbert, R. P. Agarwal, {\it Sur la fonction de Mittag-Leffler et quelquenes de ses g\'{e}n\`{e}ralisationes,} Bull. Sci. Math. (Ser.II).77, 180-185 (1953).
\bibitem{IkromovMuller} I. A. Ikromov and D. M\"{u}ller, {\it On adapted coordinate systems, {\rm Transactions of the American Mathematical Society}}, 2011, 363(6), P. 2821–-2848.
\bibitem{IkrActaMath} I. A. Ikromov, M. Kempe, D. M\"{u}ller, {\it Estimates for maximal functions associated with hypersurfaces in $\mathbb{R}^{3}$ and related problems of harmonic analysis}, {\rm Acta mathematica}, 2010, 204 (2), 151--271.
\bibitem{IkromovMuller1} I. A. Ikromov and D. M\"{u}ller, {\it Fourier Restriction for Hypersurfaces in Three Dimensions and Newton Polyhedra,} Annals of Mathematics Studies 194, Princeton Univ. Press, Princeton and Oxford, 2016.
\bibitem{mat sbor} I. A. Ikromov,  {\it Invariant estimates of two-dimensional trigonometric
integrals}, Math. USSR. Sb. 76 (1990), 473--488.
\bibitem{Karpushkin1983} V. N. Karpushkin, {\it Uniform estimates for oscillatory integrals with parabolic or hyperbolic phase,} //  Proceedings of the I. G. Petrovsky Seminar. Vol.9. 1983. P.~3-39.(Russian)
\bibitem{Karpushkin1980} V. N. Karpushkin, {\it Uniform estimates of oscillating integrals in $\mathbb{R}^{2}$},  Dokl. Academy of Sciences of the USSR, 254 (1980), no.1, 28--31.(Russian)
\bibitem{ML1} M. G. Mittag-Leffler, {\it Sur l'int\'{e}grale de Laplace-Abel,} Comp. Rend. Acad. Sci. Paris 135, 937--939 (1902).
\bibitem{ML2} M. G. Mittag-Leffler, {\it Une g\'{e}n\'{e}ralization de l'int\'{e}grale de Laplace-Abel,} Comp. Rend. Acad. Sci. Paris 136, 537-539 (1903).
\bibitem{ML3} M. G. Mittag-Leffler, {\it Sur la nouvelle fonction $E_{\alpha}(x)$}, Comp. Rend. Acad. Sci. Paris 137, 554-558 (1903).
\bibitem{ML4} M. G. Mittag-Leffler, {\it Sopra la funzione $E_{\alpha}(x)$}, Rend.R.Acc.Lincei, (Ser.5)13, 3-5 (1904).
\bibitem{PhonStein} D. H. Phong and E. M. Stein, {\it The Newton polyhedron and oscillatory integral operator}, Acta Math. 179(1), 1997, 105-152.
\bibitem{Pod}  I. Podlubny, {\it Fractional Differensial Equations}, Academic Press, New York, 1999.
\bibitem{MichaelRuzhansky} M. Ruzhansky, {\it Pointwise van der Corput Lemma for Functions of Several Variables}, Functional Analysis and Its Applications, 43 (2009), no.1, 75--77.
\bibitem{MichaelRuzhansky2012} M. Ruzhansky, {\it Multidimensional decay in the van der Corput Lemma}, Studia Mathematica, 208 (2012), no.1, 1--9.
\bibitem{Ruzhansky} M. Ruzhansky, B. Torebek, {\it Van der Corput lemmas for Mittag-Leffler functions}, Fractional Calculus and Applied Analysis, 23 (6), (2021), 1663--1677.
\bibitem{Ruzhansky2021} M. Ruzhansky, B. Torebek, {\it Van der Corput lemmas for Mittag-Leffler functions. II. $\alpha-$directions }, {\rm Bull. Sci. Math.}, {\bf 171} (2021), 103016, 23 pp.
\bibitem{Ruzhansky2022} M. Ruzhansky, A. R. Safarov, G. A. Khasanov, {\it Uniform estimates for oscillatory integrals with homogeneous polynomial phases of degree 4}, {\rm Analysis and Mathematical Physics}, {\bf 12(130)}, (2022).
\bibitem{Safarov}  A. Safarov,
 {\it Invariant estimates of two-dimensional oscillatory integrals} // Math. Notes. 104, 2018. P.293--302.
\bibitem{Safarov1}  A. Safarov, {\it On invariant  estimates for oscillatory integrals with
polynomial phase}, // J. Sib. Fed. Univ. Math. Phys. {\bf9} (2016), P.102--107.
\bibitem{Safarov2}  A. Safarov, {\it On a problem of restriction of Fourier transform on a hypersurface} //
Russian Mathematics,  63 (4), P.57-63.
\bibitem{Safarov3} A. R. Safarov, {\it Estimates for Mittag-–Leffler Functions with Smooth Phase Depending on Two
Variables}, J. Sib. Fed. Univ. Math. Phys., {\bf15(4)} (2022), P.459-–466.
\bibitem{Varch}  A. N. Varchenko, {\it Newton polyhedra and estimation of oscillating integrals} //Functional Analysis and Its Applications, vol. 10, pages 175-–196 (1976).
\bibitem{VanDer}  Van der Korput, {\it Zur Methode der stationaren phase}// Compositio Math. V.1. 1934. P.~15-38.





\end{thebibliography}
\end{document}